\documentclass[11pt]{article}

\usepackage[margin=1.15in]{geometry}
\usepackage{amsmath,amssymb,amsthm}
\usepackage[colorlinks=true,linkcolor=blue,citecolor=blue,urlcolor=blue]{hyperref}

\newtheorem{theorem}{Theorem}[section]
\newtheorem{lemma}[theorem]{Lemma}
\newtheorem{corollary}[theorem]{Corollary}
\newtheorem{definition}[theorem]{Definition}
\newtheorem{problem}[theorem]{Problem}

\newcommand{\blue}{\mathrm{blue}}

\title{Ordered Ramsey numbers of 3-uniform hypergraphs with bounded weak degeneracy}
\author{
Wen Chen$^1$\thanks{
Email: chenwenfj@163.com },\quad \quad
Zihan He$^1$\thanks{
Email: 3213223910@qq.com},\quad \quad
Qizhong Lin$^1$\thanks{
Email: linqizhong@fzu.edu.cn. Supported in part by the National Key R\&D Program of China (Grant No. 2023YFA1010202) and the NSFC (No.\ 12571361).},\quad \quad
Meng Liu$^2$\thanks{
Email: liumeng@ahu.edu.cn, Supported in part by NSFC (12331012,12471323)}
\\
\small $^1${Center for Discrete Mathematics, Fuzhou University, Fujian 350108, China}\\
\small $^2${Center of pure Mathematics, School of Mathematical Sciences, Anhui University, Hefei, Anhui 230601, China}
}
\date{}
\begin{document}
\maketitle

\begin{abstract}

The \emph{ordered Ramsey number} $r_<(G,H)$ of ordered $k$-graphs $G$ and $H$ is the least integer $N$ such that every red-blue edge-coloring of the naturally ordered complete $k$-graph on $[N]$ contains a blue ordered copy of $G$ or a red ordered copy of $H$.
We prove that there is an absolute constant $c>0$ such that, for every
integer $d\ge1$, there is a constant $C_d>0$ for which every weakly
$d$-degenerate ordered $3$-graph $H$ on $t$ vertices satisfies
\[
r_<\bigl(H,K_3^{(3)}(n)\bigr)
\le t\,2^{C_d n^{2-c/d}}
\]
for every positive integer $n$. This resolves a problem posed by Balko and Vizer ({\em SIAM J. Discrete Math., 2022}) in a stronger form.

Furthermore, we show that the weak-degeneracy hypothesis cannot be replaced
by bounded standard degeneracy. In particular, for every sufficiently large
$n$, there exists a $1$-degenerate ordered $3$-graph $F$ on at most
$2^{O(n)}$ vertices such that
$r_<\bigl(F,K_3^{(3)}(n)\bigr)>2^{\Omega(n^2)}.$

\end{abstract}

\section{Introduction}

An \emph{ordered $k$-uniform hypergraph}, or ordered $k$-graph, is a $k$-graph equipped with a linear order on its vertex set. An ordered $k$-graph $G$ contains an ordered copy of an ordered $k$-graph $H$ if there is an order-preserving injection $\phi\colon V(H)\to V(G)$ such that $\phi(e)\in E(G)$ for every $e\in E(H)$.

For ordered $k$-graphs $G$ and $H$, the \emph{ordered Ramsey number} $r_<(G,H)$ is the smallest integer $N$ such that every red--blue coloring of the edges of the naturally ordered complete $k$-graph on $[N]$ contains a blue ordered copy of $G$ or a red ordered copy of $H$. In the diagonal case, we write $r_<(H)=r_<(H,H)$.

The origins of ordered Ramsey theory can be traced back to the classical work of Erd\H{o}s and Szekeres~\cite{erdosSzekeres35}, while its systematic study was initiated independently by Conlon, Fox, Lee and Sudakov~\cite{clfs17} and by Balko, Cibulka, Kr\'al and Kyn\v{c}l~\cite{bckk13}; see also the surveys~\cite{cfsSurvey,balko25}. One motivation comes from discrete geometry, and another from the striking differences between ordered and unordered Ramsey numbers, particularly for sparse graphs~\cite{bckk13,bjv16,clfs17}.
Ordered Ramsey theory for hypergraphs of higher uniformity, however, remains far less developed. It is therefore natural to seek sharper upper bounds for broader classes of sparse ordered $3$-graphs.

In the unordered setting, every $n$-vertex graph of bounded maximum degree has Ramsey number linear in $n$. By contrast, ordered matchings can have superpolynomial ordered Ramsey numbers~\cite{bckk13,clfs17}. The phenomenon is even stronger for ordered hypergraphs. Building on earlier work of Fox, Pach, Sudakov and Suk~\cite{fpss12}, Moshkovitz and Shapira~\cite{moshShap14} proved that, for every fixed $k\ge 3$, the ordered Ramsey numbers of monotone $k$-uniform tight paths have tower growth, despite these paths having bounded maximum degree. However, in the unordered setting, for every fixed $k$ and $d$, every $n$-vertex $k$-graph of maximum degree at most $d$ has Ramsey number $O_{k,d}(n)$~\cite{crst83,cfs09,cnko08,cnko09,nsrs08}.

This naturally leads to the search for structural parameters that take the vertex order into account. One such parameter is the interval chromatic number.
 A vertex set is an \emph{interval} if it consists of consecutive vertices in the underlying linear order.
The \emph{interval chromatic number} $\chi_<(H)$ of an ordered $k$-graph
$H$ is the minimum number of intervals into which $V(H)$ can be partitioned
so that every edge meets each interval in at most one vertex.
Let $K_\chi^{(k)}(n)$ denote the ordered complete $\chi$-partite $k$-graph whose vertex classes are consecutive intervals of size $n$, arranged from left to right.
For ordered graphs, Conlon, Fox, Lee and Sudakov~\cite{clfs17} proved that if $G$ is a $d$-degenerate ordered graph on $n$ vertices with interval chromatic number $\chi$, then
\begin{equation}\label{eq:ordered-graph-bound}
r_<\bigl(G,K_\chi^{(2)}(n)\bigr)\le n^{32d\log\chi}.
\end{equation}
Thus, bounded degeneracy together with bounded interval chromatic number guarantees a polynomial ordered Ramsey bound for graphs.

The situation for ordered $3$-graphs is less well understood. For every fixed interval chromatic number $\chi$,  a result of Conlon, Fox and Sudakov~\cite{cfs11} implies that every $n$-vertex ordered $3$-graph $H$ with $\chi_<(H)\le \chi$ satisfies
\[
r_<(H)\le 2^{O_\chi(n^2)}.
\]
This bound is asymptotically tight for  complete $\chi$-partite $3$-graphs. It is therefore natural to ask whether the exponent can be improved when $H$ is sparse. Balko and Vizer \cite[Theorem~2]{BV} showed that this is indeed possible under a bounded maximum-degree assumption, even in the stronger off-diagonal setting.

\begin{theorem}[Balko, Vizer \cite{BV}]
\label{thm:bv-upper}
Let $H$ be an ordered $3$-graph on $t$ vertices with maximum degree at most $d$, and let $s$ be a positive integer. Then there exist constants $C=C(d)>0$ and $c>0$ such that
\[
r_<\bigl(H,K_3^{(3)}(s)\bigr)
\le t2^{C s^{2-1/(1+cd^2)}}.
\]
\end{theorem}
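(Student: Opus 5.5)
Our plan is to run a greedy embedding of $H$ vertex by vertex, in the order of $V(H)$, inside a suitably structured subset of $[N]$. We keep the invariant that a red copy of $K_3^{(3)}(s)$ appears as soon as the embedding gets stuck. Set $N=t\,2^{Cs^{2-\varepsilon}}$ with $\varepsilon=1/(1+cd^2)$.

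\textbf{Step 1: partition.} Split $[N]$ into $t$ consecutive intervals $I_1<\dots<I_t$, each of size $M=2^{Cs^{2-\varepsilon}}$. Vertex $v_i$ of $H$ is to be embedded into $I_i$, so the order is preserved automatically.

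\textbf{Step 2: dependent random choice for pairs.} In each triple of intervals we look at the blue link graphs $L(x)$ of pairs. Since $H$ has maximum degree $d$, each vertex lies in at most $d$ edges, so the embedding of $v_i$ is constrained by at most $d$ earlier pairs $\{\phi(v_j),\phi(v_k)\}$, one for each edge $\{v_j,v_k,v_i\}$ with $j<k<i$. The key lemma should be a density-increment or dependent-random-choice statement of the following form. Given sets $A,B,C$ of size $m$, either there is a red $K_3^{(3)}(s)$ with parts in $A,B,C$, or there are large subsets $A',B'$ such that for all but a small fraction of pairs $(a,b)\in A'\times B'$, the blue common neighbourhood in $C$ has size at least $m\,2^{-s^{2-\varepsilon}}$.

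We would prove this by the Kővári--Sós--Turán/Erdős count for $K_{s,s,s}$-free 3-partite 3-graphs. Red density above $1-2^{-O(s^{2-\varepsilon})}$ forces a red $K_3^{(3)}(s)$, because a random $s\times s$ grid of pairs has $\ge s$ common red neighbours in expectation once the density is $\ge (s/m)^{1/s^2}$-type. Otherwise blue edges have density at least $2^{-O(s^{2-\varepsilon})}$, and an iterated Cauchy--Schwarz argument yields the required structure.

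\textbf{Step 3: iterate the restriction.} We maintain candidate sets $U_i\subseteq I_i$. When embedding $v_i$, each vertex $v_\ell$ with $\ell>i$ that shares an edge with $v_i$ sees its candidate set shrink by a factor $2^{-O(s^{2-\varepsilon})}$. By the degree bound this happens at most $d^2$ times per vertex, so $|U_\ell|\ge M\,2^{-O(d^2 s^{2-\varepsilon})}$. This is still positive provided $C=C(d)$ is chosen large enough.

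\textbf{Main obstacle.} The hardest part is making the "all but a small fraction" typicality compatible across up to $d$ simultaneous pair-constraints, each of which must be met at the moment we choose $\phi(v_i)$. Requiring the good sets to be robust under $d$ further restrictions costs an exponent loss depending on $d^2$. This loss is exactly what produces the exponent $2-1/(1+cd^2)$: we would balance the KST threshold $s^{2}$ against a power saving $s^{1/(1+cd^2)}$ obtained from choosing the random grid dimension as $s^{1-\varepsilon'}$. The first thing to try is a single round of dependent random choice with $\approx d^2$ random pairs in the grid, then tune $\varepsilon$.
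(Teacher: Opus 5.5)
Your skeleton (one interval per vertex, greedy embedding, candidate sets that shrink a bounded number of times) is right. What is missing is the mechanism that makes the iteration work, and the parameters you propose would break it.

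\textbf{The gap.} You track only candidate sets $U_\ell$. For an edge $\{v_i,v_\ell,v_{\ell'}\}$ with $i<\ell<\ell'$, embedding $v_i$ does not restrict $U_\ell$ or $U_{\ell'}$. It restricts which \emph{pairs} $(\phi(v_\ell),\phi(v_{\ell'}))$ are admissible, namely the edges of $L_{\blue}(\phi(v_i))$. Further edges through $v_\ell,v_{\ell'}$ intersect further link graphs with this one. You must record this pair graph and keep it dense in a form that survives the later, unpredictable shrinking of $U_\ell$ and $U_{\ell'}$. Otherwise nothing guarantees that, when you reach $v_\ell$, some choice leaves a large blue common neighbourhood in the current $U_{\ell'}$, simultaneously for all constraints.

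Your Step~2 lemma cannot supply this, for two reasons:
\begin{itemize}
\item It is unproved. The Erd\H{o}s/K\H{o}v\'ari--S\'os--Tur\'an count only bounds the global red density of $A\times B\times C$ and says nothing about how blue codegrees are distributed.
\item Even if true, ``all but a small fraction of pairs of $A'\times B'$'' is a statement about fixed sets. It is not inherited by the subsets you later work in. This is the obstacle you name, and it is the heart of the proof, not a technicality.
\end{itemize}

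\textbf{The quantitative choice is also fatal.} Suppose densities drop by $2^{-\Theta(s^{2-\varepsilon})}$ per step. The next time you must rule out a bad choice, you need a red $K_3^{(3)}(s)$ supported on triangles of a host of that density. A random host of triangle density $\delta$ contains no $K_{s,s,s}$ unless $m\ge\delta^{-\Omega(s)}=2^{\Omega(s^{3-\varepsilon})}$. Your own threshold $(s/m)^{1/s^2}$ equals $1-\Theta(s^{-\varepsilon})$ for $m=2^{Cs^{2-\varepsilon}}$, so the natural loss is only polynomial in $s$. Also, a grid of dimension $s^{1-\varepsilon'}$ cannot produce a $K_3^{(3)}(s)$, so that is not the source of the saving.

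\textbf{How the paper gets this result.} The paper only cites this theorem, but it follows from Theorem~\ref{thm:main}: maximum degree at most $d$ makes every ordering a weak $d$-degeneracy ordering, and $1/(12d+2)\ge 1/(1+14d^2)$. That proof works as follows.
\begin{itemize}
\item For every pair $y,z$ of unembedded vertices it keeps a bipartite graph $P_{y,z}\subseteq C_y\times C_z$ that is bi-$(\eta_y,\eta_z,\rho^{b(y,z)})$-dense. That is, it has density at least $\rho^{b(y,z)}$ between \emph{every} pair of subsets of proportional size. The $\eta$'s are rescaled so that this survives each shrinking of $C_y$ or $C_z$.
\item To embed $v$, it discards the vertices of small degree in some $P_{v,z}$; there are few of them, by bi-density.
\item It also discards the fewer than $M$ bad vertices $w$ for which some $P_{y,z}\cap L_{\blue}(w)$ is not bi-dense with density $\rho^{b(y,z)+1}$.
\item The bound on bad vertices (Lemma~\ref{lem:bad}) comes from the sampling Lemma~\ref{lem:sampling} and the Conlon--Fox--Sudakov relative counting Lemma~\ref{lem:cfs}. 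That lemma finds a red $K_3^{(3)}(s)$ once a $(1-2\rho)$-fraction of the $\Omega(\delta m^3)$ triangles of a tripartite graph are red, provided $m\ge\exp(O(\delta^{-2}s^{3/2}+\rho s^2+s\log(1/\delta)))$.
\item With $\rho=\Theta(s^{-1/(12d+2)})$, all densities stay at least $\rho^{O(d)}$ and candidate sets shrink by at most a factor $\rho^d$. Balancing $\rho^{-6d}s^{3/2}$ against $\rho s^2$ gives the exponent $2-1/(12d+2)$.
\end{itemize}
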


On the other hand, a result of Fox and He~\cite{foxHe} implies that
$r_<\bigl(K_4^{(3)},K_3^{(3)}(s)\bigr)
\ge 2^{\Omega(s\log s)}$.
Balko and Vizer~\cite{BV} also gave a direct proof of this lower bound.
A substantial gap between the known upper and lower bounds remains.

\begin{definition}[Weak degeneracy]\label{def}
An ordered $k$-graph $H$  on $t$ vertices is \emph{weakly $d$-degenerate} if its underlying $k$-graph admits an ordering $u_1,\ldots,u_t$ of its vertices such that, for every $i$,
\[
\bigl|\{e\in E(H): u_i\in e
\text{ and }
e\cap\{u_1,\ldots,u_{i-1}\}\ne\varnothing\}\bigr|
\le d.
\]
\end{definition}
Note that Balko and Vizer \cite{BV} call this notion simply degeneracy. Here we call it weak degeneracy to distinguish it from a more commonly used notion of hypergraph degeneracy, which we refer to below as standard degeneracy.

Motivated by the corresponding results for ordered graphs of bounded degeneracy and bounded interval chromatic number~\cite{clfs17}, Balko and Vizer~\cite[Section~6]{BV} asked whether the bounded maximum degree assumption in Theorem~\ref{thm:bv-upper} could be replaced by bounded weak degeneracy.

\begin{problem}[Balko, Vizer \cite{BV}]
\label{prob:bv-degeneracy}
Can Theorem~\ref{thm:bv-upper} be extended from ordered $3$-graphs of bounded maximum degree to
ordered $3$-graphs of bounded weak degeneracy?
\end{problem}
Our main theorem answers Problem~\ref{prob:bv-degeneracy} affirmatively.
Moreover, it improves the dependence on $d$ in the exponent of Theorem~\ref{thm:bv-upper}.

\begin{theorem}\label{thm:main}
There is an absolute constant $c>0$ such that, for every integer $d\ge1$,
there is a constant $C_d>0$ with the following property. Let $H$ be a weakly $d$-degenerate ordered $3$-graph on $t$
vertices. Then, for every positive integer $n$,
\[
  r_<(H,K^{(3)}_3(n))\le t\,2^{C_d n^{2-c/d}} .
\]
\end{theorem}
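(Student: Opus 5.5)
We argue by contraposition. Fix a red--blue colouring of the complete $3$-graph on $[N]$ with $N=t\,2^{C_d n^{2-c/d}}$ that has no red ordered copy of $K_3^{(3)}(n)$, and embed $H$ in blue. Order $V(H)$ by its natural order as $v_1<\dots<v_t$. The weak-degeneracy ordering $u_1,\dots,u_t$ is generally a different order, and it tells us in which order to place the vertices so that each new vertex is constrained by at most $d$ earlier edges. We place the vertices in this order into disjoint intervals $I_1<\dots<I_t$ of $[N]$ of length $m=N/t$, sending $u_i$ into the interval indexed by its position in the natural order. This ordering of intervals makes every embedding order-preserving automatically. The factor $t$ in the bound pays for the $t$ intervals; what remains is to show that, inside each interval, enough candidate vertices survive the constraints.

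\textbf{Key lemma (density/supersaturation step).} In a colouring with no red $K_3^{(3)}(n)$, for any three sets $A<B<C$ of size $m\ge 2^{C n^{2-c/d}}$, all but a small fraction of triples in $A\times B\times C$ can be "good" in a robust sense. More precisely, we aim for a dependent-random-choice type statement. One can find large subsets $A'\subseteq A$, $B'\subseteq B$, $C'\subseteq C$, each of size at least $m^{1-o(1)}$ or $m\,2^{-O(n^{2-c/d})}$, such that for every vertex the blue link graph into the other two parts is dense. The dense case is then the relevant one for an embedding argument in the spirit of the proof of Theorem~\ref{thm:bv-upper}. To get the lemma, we use a Kővári--Sós--Turán/Erdős bound for $3$-partite $3$-graphs. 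If the red density in $A\times B\times C$ exceeds $1-\varepsilon$ with $\varepsilon$ small, a counting argument finds a red $K_3^{(3)}(n)$ whenever $m\ge 2^{O(n^2\varepsilon')}$, where $\varepsilon'$ is polynomial in $\varepsilon$. Hence blue density is at least $\varepsilon\approx n^{-c/d}$ up to logs. This trade-off is exactly where the exponent $n^{2-c/d}$ should come from: we need blue density about $2^{-O(1/d)\cdot\log n}$, since each vertex faces $d$ constraints.

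\textbf{Embedding.} We maintain for each unembedded vertex $v$ a candidate set $T(v)\subseteq I_{\mathrm{pos}(v)}$. When we embed $u_i$, it lies in at most $d$ edges meeting earlier vertices. Each such edge either has both other vertices already embedded, in which case it gives a link condition on $u_i$ alone, or it has one unembedded vertex $w$, in which case it gives a pair condition restricting $T(w)$ through the blue link of the pair. We pick the image of $u_i$ by a random/greedy choice inside $T(u_i)$, so that every affected candidate set shrinks by at most a factor of roughly $\varepsilon$. Over the whole process each vertex's candidate set is affected by at most $d$ earlier edges; edges meeting earlier vertices are exactly what weak degeneracy counts. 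So sets shrink by at most $\varepsilon^{O(d)}$ overall. They must also survive a regularisation step, in which we pass to subsets where blue densities are inherited (a density-increment/iteration that costs $2^{O(n^{2-c/d})}$ in size each time, with only $O_d(1)$ rounds). This forces $\varepsilon^{O(d)}m\ge 1$ together with the lemma's requirement, and the choice $\varepsilon=n^{-c'/d}$ balances the two.

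\textbf{Main obstacle.} The hard step is guaranteeing that blue density is inherited by the candidate sets after they have been shrunk. Density in $A\times B\times C$ does not pass to arbitrary subsets, and edges with one unembedded vertex restrict a set via a link that depends on two previously chosen vertices. I would first try a dependent-random-choice cleaning: before embedding, iteratively pass to subsets where every vertex pair from two intervals that could occur together in an edge of $H$ has blue co-degree at least $\varepsilon|I|$ in the third interval. Failure of such a cleaning would produce a large red-dense $3$-partite structure, and so a red $K_3^{(3)}(n)$ by the lemma. If pairwise cleaning across all $\binom{t}{3}$ interval triples is too costly, I would instead run the embedding within a bounded number of "active" triples at a time, using weak degeneracy to bound how many triples any vertex participates in while it is active.
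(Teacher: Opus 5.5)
\textbf{There is a genuine gap.} Your framework matches the paper: intervals in the natural order of $H$, embedding along the weak-degeneracy order, and a supersaturation bound as the only place where the absence of a red $K_3^{(3)}(n)$ is used. But the step you flag as the main obstacle is the actual content of the proof, and your proposed repair does not work.

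Candidate sets alone are not enough. When $y$ is embedded, each edge $\{x,y,z\}$ with $x$ already embedded cuts $T(z)$ down to a blue co-neighbourhood. So $f(y)$ needs large degree into the \emph{current} $T(z)$ in the common blue link of all such $f(x)$. That requires this common link to be dense on the already shrunken rectangle $T(y)\times T(z)$. Hence each $f(x)$ must be chosen, at its own step, so that its blue link stays dense on the future rectangle of every pair $(y,z)$ with $\{x,y,z\}\in E(H)$ and $y,z$ unembedded. Your greedy step never imposes this condition.

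These forward edges are not counted by weak degeneracy, and there can be $\Theta(t)$ of them at one vertex (a star of $(t-1)/2$ edges through $u_1$ is weakly $1$-degenerate). Each can rule out its own vertices of $I_v$, so equal intervals of length $N/t$ also fail for large $t$.

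Your min-co-degree cleaning cannot be achieved, and its failure does not produce red density. On three intervals of length $m=2^{Cn^{2-c/d}}$, colour $x<y<z$ red iff $xy$ is an edge of a random graph of density $p$ slightly below $m^{-2/n}$. Then:
\begin{itemize}
\item there is no red ordered $K_3^{(3)}(n)$, since it would give a $K_{n,n}$ in the graph;
\item red triples have density only $p$;
\item yet every graph edge $ab$ has blue co-degree $0$;
\item any two sets with no graph edge between them have smaller side $O(p^{-1}\log m)=2^{O(n^{1-c/d})}$.
\end{itemize}
Moreover, a co-degree bound into a full interval says nothing about co-degree into a subset already cut down to another co-neighbourhood. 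So several constraints on the same vertex do not compose.

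\textbf{What the paper does instead.} For every unembedded pair it carries a graph $P_{y,z}\subseteq C_y\times C_z$, the common blue link of the embedded $x$ with $\{x,y,z\}\in E(H)$. It then asks only for a hereditary density that tolerates exceptions: $P_{y,z}$ is bi-$(\eta_y,\eta_z,\rho^{b(y,z)})$-dense. Here $\eta_y=\rho^{d-a(y)}/(4d)$ grows by $\rho^{-b}$ exactly when $C_y$ shrinks by at most $\rho^{b}$, so the property survives shrinking automatically. The image $w$ of $v$ must satisfy two conditions:
\begin{itemize}
\item (a) $w$ has degree at least $\rho^{b(v,z)}|C_z|$ in each $P_{v,z}$ with $b(v,z)>0$. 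By bi-density, all but a $d\eta_v\le 1/4$ fraction of $C_v$ qualify.
\item (b) $w$ is good for every forward edge $\{v,y,z\}$, meaning $P_{y,z}\cap L_{\blue}(w)$ is still bi-dense with density lowered by a factor $\rho$.
\end{itemize}

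The lemma you are missing says that fewer than $M$ vertices are bad for a given pair. From $M$ bad vertices, weighted random sampling gives a tripartite graph with at least $\delta M^3/2$ triangles, where $\delta=\sigma\eta_y\eta_z\ge\rho^{3d}/(16d^2)$. A $(1-2\rho)$-fraction of these triangles are red, so the Conlon--Fox--Sudakov lemma gives a red $K_3^{(3)}(n)$. Because this is an absolute count, intervals are taken of size $16\rho^{-d}(r(v)+1)M$, and $\sum_v r(v)=e(H)\le dt/2$ yields the linear factor $t$.

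This also corrects your parameter accounting. The condition $\varepsilon^{O(d)}m\ge 1$ is never binding. The exponent comes from balancing $n^2\log\frac{1}{1-8\rho}\approx\rho n^2$ against $\delta^{-2}n^{3/2}\approx\rho^{-6d}n^{3/2}$, which gives $\rho\approx n^{-1/(12d+2)}$. Your key lemma, stated for the complete system $A\times B\times C$, has no dependence on the triangle density $\delta$. That dependence is exactly where $d$ enters the exponent.
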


For simplicity, we make no attempt to optimize the absolute constant $c$. Our proof builds on the embedding method of
Conlon, Fox and Sudakov~\cite{CFS}, as further developed by Balko and
Vizer~\cite{BV}. The main new ingredient is an embedding scheme tailored
to weak degeneracy. By separately tracking edges with one and two
previously embedded vertices and maintaining suitably chosen candidate
sets and auxiliary pair graphs, we control the embedding process using
only the weak-degeneracy condition, even though the maximum degree may be
unbounded. As in Balko and Vizer's work, the off-diagonal theorem
immediately yields a diagonal consequence for ordered $3$-graphs of
interval chromatic number at most three.

\begin{corollary}\label{cor:main-diagonal}
There is an absolute constant $c>0$ such that, for every integer $d\ge1$,
there is a constant $C_d>0$ for which every weakly $d$-degenerate ordered
$3$-graph $H$ on $n$ vertices with $\chi_<(H)\le3$ satisfies
\[
  r_<(H)
  \le \,2^{C_d n^{2-c/d}}.
\]
\end{corollary}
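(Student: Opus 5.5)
The plan is to derive the corollary directly from Theorem~\ref{thm:main}, using monotonicity of ordered Ramsey numbers under taking ordered subgraphs. Since $\chi_<(H)\le 3$, we can partition $V(H)$ into at most three consecutive intervals $I_1<I_2<I_3$ such that every edge meets each interval in at most one vertex. If fewer than three intervals are needed, we add empty intervals, or split an interval; this is harmless because only the property that each edge meets each part at most once matters. Each interval has at most $n$ vertices, so there is an order-preserving injection of $H$ into $K_3^{(3)}(n)$: place $I_j$ into the $j$-th class of size $n$, preserving order within each class. Every edge of $H$ has exactly one vertex in each of $I_1,I_2,I_3$, because it has three vertices and meets each interval at most once. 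Hence every edge maps to an edge of $K_3^{(3)}(n)$, and $H$ is an ordered subgraph of $K_3^{(3)}(n)$.

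Next, take any red--blue coloring of the complete ordered $3$-graph on $[N]$ with $N=r_<(H,K_3^{(3)}(n))$. It contains either a blue ordered copy of $H$, or a red ordered copy of $K_3^{(3)}(n)$. In the second case the red copy of $K_3^{(3)}(n)$ contains a red ordered copy of $H$ by the embedding above. Hence $r_<(H)\le r_<(H,K_3^{(3)}(n))$.

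Finally, we apply Theorem~\ref{thm:main} with $t=n$ and the same $d$, which gives $r_<(H)\le n\,2^{C_d n^{2-c/d}}$. The factor $n$ is absorbed by enlarging the constant: $n\le 2^{n}\le 2^{n^{2-c/d}}$ as long as $2-c/d\ge1$. This holds if we take $c\le1$, which we may assume by shrinking the absolute constant. Replacing $C_d$ by $C_d+1$ then yields the claimed bound. There is no real obstacle here; the only points requiring care are the padding to exactly three intervals and checking that edges meeting every interval at most once are genuinely crossing edges of $K_3^{(3)}(n)$.
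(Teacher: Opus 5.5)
Your proposal is correct and is essentially the paper's own proof: embed $H$ into $K_3^{(3)}(n)$ via the interval partition to get $r_<(H)\le r_<(H,K_3^{(3)}(n))$, then apply Theorem~\ref{thm:main} with $t=n$ and absorb the factor $n$ into the exponential. Your extra check that $c\le 1$ makes this absorption valid is consistent with the paper's choice $c=1/14$.
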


\begin{proof}
Since $\chi_<(H)\le3$, the ordered $3$-graph $H$ is contained in
$K_3^{(3)}(n)$, so we obtain that $r_<(H)\le\allowbreak r_<(H,K_3^{(3)}(n))$.
The result follows from Theorem~\ref{thm:main} by absorbing the factor $n$
into the exponential.
\end{proof}

We next examine whether the weak-degeneracy hypothesis in
Theorem~\ref{thm:main} is essential. For this purpose, we recall the
standard notion of hypergraph degeneracy.

\begin{definition}[Standard degeneracy]\label{def2}
An ordered $k$-graph $H$ on $t$ vertices is \emph{$d$-degenerate} if its underlying $k$-graph admits an ordering $u_1,\ldots,u_t$ of its vertices such that, for every $i$,
\[
\bigl|\{e\in E(H): u_i\in e
\text{ and }
e\setminus\{u_i\}\subseteq\{u_1,\ldots,u_{i-1}\}\}\bigr|
\le d.
\]
\end{definition}

Note that weak $d$-degeneracy implies standard $d$-degeneracy. Although
the two notions coincide for graphs, standard degeneracy is strictly
weaker for hypergraphs of uniformity at least three.

Our second result shows that the weak-degeneracy hypothesis cannot be
replaced by standard hypergraph degeneracy.

\begin{theorem}\label{main-2}
There are absolute constants $c,C>0$ such that, for every sufficiently
large $n$, there exists a $1$-degenerate ordered $3$-graph $F$ on at most
$2^{Cn}$ vertices such that
\[
  r_<\bigl(F,K_3^{(3)}(n)\bigr)>2^{c n^2}.
\]
\end{theorem}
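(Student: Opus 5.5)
The plan is to build $F$ so that any blue copy of it forces a large clique in an auxiliary graph $G$ on $[N]$. The coloring will then be random, but only on triples whose first two vertices span an edge of $G$.

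\textbf{The hypergraph $F$.} Fix an integer $m$, to be chosen of order $2^{O(n)}$. Let $F$ have a vertex set $A=\{a_1<\dots<a_m\}$ placed leftmost in the order, followed by one new vertex $v_{ab}$ for every pair $\{a,b\}\subseteq A$, in any order. The edges of $F$ are the triples $\{a,b,v_{ab}\}$. Listing the vertices of $A$ first and then the $v_{ab}$'s shows that $F$ is $1$-degenerate:
\begin{itemize}
\item no edge lies inside $A$;
\item each $v_{ab}$ is the last vertex of exactly one edge, and every other edge through $v_{ab}$ is absent.
\end{itemize}
Moreover, $|V(F)|=m+\binom m2\le m^2$.

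\textbf{The coloring.} Let $N=2^{cn^2}$ and $\varepsilon=2^{-4cn}$. Take a random graph $G$ on $[N]$ in which each pair is an edge independently with probability $1-\varepsilon$. Independently, flip a fair coin for every triple. Colour a triple $x<y<z$ blue iff $xy\in E(G)$ and its coin is blue; otherwise colour it red.

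\textbf{No blue $F$.} In an ordered copy of $F$, every $a<b$ in $A$ is followed by a larger vertex $v_{ab}$ with $\{a,b,v_{ab}\}$ blue. By the definition of the colouring, this forces $ab\in E(G)$, so the image of $A$ is an $m$-clique of $G$. The expected number of $m$-cliques in $G$ is at most
\[
N^m(1-\varepsilon)^{\binom m2}\le \exp\Bigl(m\ln N-\varepsilon m(m-1)/2\Bigr).
\]
This is $o(1)$ once $m\ge 4\ln N/\varepsilon+1=O(n^2 2^{4cn})$. Taking $m$ of this order gives $|V(F)|\le 2^{Cn}$, as required.

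\textbf{No red $K_3^{(3)}(n)$.} Fix sets $X<Y<Z$ of size $n$. Condition on $G$. Every triple $(x,y,z)$ with $xy\in E(G)$ is red with probability $1/2$, independently of all other triples. Hence the probability that all triples of $X\times Y\times Z$ are red is $2^{-n\,e_G(X,Y)}$. Averaging over $G$, the pairs of $X\times Y$ are independent, so this probability equals
\[
\prod_{(x,y)\in X\times Y}\bigl(\varepsilon+(1-\varepsilon)2^{-n}\bigr)\le (2\varepsilon)^{n^2}=2^{-(4c n-1)n^2}.
\]
There are at most $N^{3n}=2^{3cn^3}$ choices of $(X,Y,Z)$. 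For large $n$ the expected number of red copies is therefore at most $2^{3cn^3-4cn^3+n^2}=o(1)$.

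\textbf{Conclusion and main obstacle.} Both bad events have probability $o(1)$, so some colouring of $[N]$ with $N=2^{cn^2}$ has neither a blue $F$ nor a red $K_3^{(3)}(n)$. This gives $r_<(F,K_3^{(3)}(n))>2^{cn^2}$.

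The key tension is the choice of $\varepsilon$. For the red count, the union bound has only $n^2$ pair factors available to beat $2^{\Theta(n^3)}$ choices of $(X,Y,Z)$. This forces $\varepsilon$ to be exponentially small in $n$, so $G$ must be nearly complete. On the other hand, a nearly complete $G$ has large cliques, and the clique size $m$ determines $|V(F)|$. The calculation above shows that the resulting clique number $\approx \ln N/\varepsilon$ is still only $2^{O(n)}$, which keeps $F$ within the allowed size. Checking this balance is the only delicate step; the remaining estimates are routine first-moment calculations.
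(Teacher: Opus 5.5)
Your proof is correct and is essentially the paper's argument. You use the same ordered $3$-graph (the paper's $F_m$), the same lifting of a sparse random red pair-colouring to triples via their first two vertices, and the same two first-moment bounds: a blue copy of $F$ forces an $m$-clique in $G$, and a red $K_3^{(3)}(n)$ forces the pairs of $X\times Y$ to be non-edges of $G$.

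The differences are cosmetic. The extra fair coins on triples are unnecessary and only make red copies more likely. Dropping them gives exactly the paper's colouring with $p=\varepsilon$, where a red copy forces all of $X\times Y$ to be non-edges and the union bound runs only over $(X,Y)$. You should also state explicitly that $c\le 1/4$, which is what makes $(1-\varepsilon)2^{-n}\le\varepsilon$ hold.
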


The construction underlying Theorem~\ref{main-2} is obtained by lifting a
suitably biased random coloring of pairs to a coloring of triples. The
auxiliary pair-coloring is chosen to contain neither a red $K_{n,n}$ nor
a blue $K_m$. We then construct $F$ by introducing, for each pair of
vertices of an ordered $K_m$, a distinct new vertex completing that pair
to an edge. This makes $F$ standard $1$-degenerate. We place all new vertices after the original
vertices and colour each triple according to its first two vertices.
Under the lifted
coloring, a red ordered $K_3^{(3)}(n)$ would yield a red $K_{n,n}$ in the
auxiliary coloring, whereas a blue ordered copy of $F$ would yield a
blue $K_m$. These two obstructions give the claimed lower bound.

Since $F$ has only $2^{O(n)}$ vertices, Theorem~\ref{main-2} shows that no
analogue of Theorem~\ref{thm:main} with a subquadratic exponent in $n$ can
hold for ordered $3$-graphs of bounded standard degeneracy.

The remainder of the paper is organized as follows. In Section~2, we collect
the preliminary tools needed for the proof of our main result. In Section~3,
we prove Theorem~\ref{thm:main} by means of an iterative embedding argument.
In Section~4, we prove Theorem~\ref{main-2} and show that standard hypergraph
degeneracy is not sufficient for an analogue of our main upper bound.

\section{Preliminaries}
Throughout the paper, all logarithms are to base $2$ and are denoted by $\log$. We omit floor and ceiling signs whenever they do not affect the arguments.

For disjoint sets $X,Y$ and a bipartite graph $P\subseteq X\times Y$, write
\[
  d_P(X',Y')=\frac{e_P(X',Y')}{|X'||Y'|}
\]
for non-empty $X'\subseteq X$, $Y'\subseteq Y$.

\begin{definition}
Let $0<\alpha,\beta,\sigma\le 1$. A bipartite graph $P\subseteq X\times Y$ is
bi-$(\alpha,\beta,\sigma)$-dense if every $X'\subseteq X$, $Y'\subseteq Y$ with
\[
  |X'|\ge \alpha |X|,\qquad |Y'|\ge \beta |Y|
\]
satisfies $d_P(X',Y')\ge \sigma$.
\end{definition}

The following embedding lemma shows that if a tripartite graph contains sufficiently many triangles and almost all of them are edges of a $3$-graph $R$, then $R$ contains a large complete tripartite $3$-graph.

\begin{lemma}[Conlon, Fox and Sudakov \cite{CFS}]\label{lem:cfs}
There is an absolute constant $c_0>0$ such that the following holds. Let
$m$ and $s$ be positive integers, and let $0<\delta\le1$. Let $V_1,V_2,V_3$ be pairwise disjoint sets, each of size at most $m$, and let
$G_{ij}$ be a bipartite graph between $V_i$ and $V_j$ for $1\le i<j\le 3$.
Suppose that the tripartite graph $G_{12}\cup G_{13}\cup G_{23}$ has at least
$\delta m^3$ triangles. Let $R$ be a 3-graph containing at least a
$(1-\eta)$-proportion of these triangles, where $0<\eta<1/8$. If
\[
  \exp(c_0\delta^{-2}s^{3/2})(1-4\eta)^{-4s^2}
  \left(\frac{16}{\delta}\right)^{4s}\le m,
\]
then $R$ contains a copy of $K^{(3)}_3(s)$ with one vertex class in each $V_i$.
\end{lemma}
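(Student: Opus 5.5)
Lemma~\ref{lem:cfs} is quoted from Conlon, Fox and Sudakov~\cite{CFS}, so in the paper we would simply invoke it. If we had to prove it, we would use the link-intersection, dependent-random-choice method. Write $T$ for the set of triangles of $G_{12}\cup G_{13}\cup G_{23}$, so $|T|\ge\delta m^3$. Call a triangle \emph{good} if it is an edge of $R$, so all but an $\eta$-fraction of $T$ are good. For $x\in V_1$ let $L_x\subseteq G_{23}$ be the bipartite graph of pairs $(y,z)$ with $xyz$ a good triangle. A copy of $K^{(3)}_3(s)$ is then equivalent to $s$ vertices $x_1,\dots,x_s\in V_1$ together with a complete bipartite graph $K_{s,s}$ inside $\bigcap_i L_{x_i}$. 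The proof therefore has two parts: find many $s$-tuples in $V_1$ whose common link is dense, and then find a $K_{s,s}$ in that common link.

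First we clean up. We delete vertices and pairs lying in few triangles, losing at most a constant fraction of $T$. We also pass to the part of $G_{23}$ where the pairs have large triangle-degree $d(y,z)=|N_{G_{12}}(y)\cap N_{G_{13}}(z)|\gtrsim\delta m$, and where at most a $4\eta$-fraction of the triangles on each such pair are bad. The latter is possible by Markov's inequality, which is the source of the $1-4\eta$.

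Second, we choose $x_1,\dots,x_s$ by sampling with a triangle-weighted distribution. For a fixed good pair $(y,z)$, the probability that all $x_i$ complete it to good triangles is about $\bigl(d(y,z)(1-4\eta)/m\bigr)^{s}$. Summing over pairs shows that the common link $\bigcap_i L_{x_i}$ has expected size at least roughly $\delta^{O(s)}(1-4\eta)^{s}m^2$. By convexity, we can fix an outcome where this holds and where, in addition, only few pairs of small triangle-degree survive.

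Third, we find $K_{s,s}$ in the common link. The naive step is to apply Kővári--Sós--Turán to a bipartite graph of density $p=\delta^{O(s)}(1-4\eta)^{s}$. This requires $m\ge p^{-s}$, which gives $(1-4\eta)^{-s^2}$, the right shape for the $\eta$-term, but also $\delta^{-O(s^2)}$, which is too weak.

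The main obstacle is therefore to improve the $\delta$-dependence from $\delta^{-s^2}$ to $\exp(\delta^{-2}s^{3/2})$. We would try a two-scale scheme. Split the $s$ vertices on each side into about $\sqrt s$ blocks of size $\sqrt s$. Then alternate dependent random choice steps: fixing a block in $V_2$ shrinks the candidate sets in $V_1$ and $V_3$ only by a factor $\delta^{O(\sqrt s)}$, and a Chernoff/union bound guarantees that subsequent blocks keep a constant fraction of their density, so the density loss is only polynomial in $\delta$ per step. Over $\sqrt s$ rounds of blocks of size $\sqrt s$, each costing $\exp(O(\delta^{-2}\sqrt s))$ through the union bound over $\binom{m}{\sqrt s}$-type choices, the total cost is $\exp(c_0\delta^{-2}s^{3/2})$. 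The $\eta$-errors accumulate multiplicatively over all $s^2$ (up to constant factors) required triples, giving $(1-4\eta)^{-4s^2}$. The $(16/\delta)^{4s}$ term absorbs the initial cleaning and the final choice of the $2s$ vertices in $V_2\cup V_3$. We expect the main work to be the bookkeeping that keeps the block densities from degrading.
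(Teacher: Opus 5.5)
The paper gives no proof of this lemma. It is quoted from Conlon, Fox and Sudakov~\cite{CFS} and used as a black box, only inside the proof of Lemma~\ref{lem:bad}. So your opening move, simply invoking it, is exactly the paper's route.

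The self-contained sketch you append is not a proof, and the gap sits at its only nontrivial step. Your cleaning, sampling and K\H{o}v\'ari--S\'os--Tur\'an steps are fine. As you say, though, they give only $m\ge\delta^{-O(s^2)}(1-4\eta)^{-s^2}$, because the averaging in step two only guarantees that the common good link of $x_1,\dots,x_s$ has density $\delta^{O(s)}(1-4\eta)^{s}$.

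The block scheme meant to upgrade $\delta^{-O(s^2)}$ to $\exp(c_0\delta^{-2}s^{3/2})$ is not an argument.
\begin{itemize}
\item The graphs $G_{ij}$ are fixed and arbitrary, so a Chernoff bound has nothing to act on when you claim that later blocks keep a constant fraction of their density. Controlling the density of $G_{23}$, and of the good pairs, between the shrinking candidate sets is precisely the difficulty. The sketch gives no mechanism for it.
\item A union bound over $\binom{m}{\sqrt s}$ choices costs a factor $m^{\sqrt s}$. This depends on $m$ itself, so it cannot be paid for by a lower bound on $m$ of the form $\exp(O(\delta^{-2}\sqrt s))$ without a further argument that you do not supply.
\item Even granting that per-round cost, $O(\sqrt s)$ rounds multiply to $\exp(O(\delta^{-2}s))$, not $\exp(\delta^{-2}s^{3/2})$. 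The stated exponent therefore does not come out of your own accounting.
\item The exponent $s^2$ in $(1-4\eta)^{-4s^2}$ does not come from ``$s^2$ triples''; a copy of $K^{(3)}_3(s)$ has $s^3$ of them. In your step three it comes from applying K\H{o}v\'ari--S\'os--Tur\'an to a link of density $(1-4\eta)^{\Theta(s)}$.
\end{itemize}
If a proof is wanted, it has to reproduce the argument of~\cite{CFS}. Otherwise, cite the lemma as the paper does.
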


The exact value of $c_0$ is irrelevant in what follows. In particular, after replacing $\delta$ by $\delta/2$ and $\eta$ by
$2\rho$, the size condition in Lemma~\ref{lem:cfs} is guaranteed whenever
\[
  m\ge
  \exp\left(
    C\left(
      \delta^{-2}s^{3/2}
      +s^2\log\frac{1}{1-8\rho}
      +s\log\frac{1}{\delta}
    \right)
  \right),
\]
where $C>0$ is an absolute constant.

\begin{lemma}\label{lem:sampling}
Let $B,C$ be finite sets and let $m$ be a positive integer with
$m\le \min\{|B|,|C|\}$. Let $w,u:B\times C\to [0,\infty)$ be weight functions.
Put
\[
  W=\sum_{b\in B,\ c\in C} w(b,c),\qquad
  U=\sum_{b\in B,\ c\in C} u(b,c).
\]
Assume that $W\ge \delta m|B||C|$,  $U\le \theta W$,
where $0<\delta,\theta\le 1$. Then there are subsets
$B'\subseteq B$ and $C'\subseteq C$ such that $|B'|=|C'|=m$ and
\[
  \sum_{(b,c)\in B'\times C'} w(b,c)\ge \frac12\delta m^3,
\qquad
  \sum_{(b,c)\in B'\times C'} u(b,c)
  \le 2\theta
  \sum_{(b,c)\in B'\times C'} w(b,c).
\]
\end{lemma}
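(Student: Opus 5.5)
We prove this by a first-moment argument: choose $B'$ and $C'$ at random and apply linearity of expectation to a single combined random variable, so that both conclusions follow from one favourable outcome.

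Choose $B'\subseteq B$ uniformly at random among all $m$-element subsets. Independently, choose $C'\subseteq C$ uniformly among all $m$-element subsets. This is possible since $m\le\min\{|B|,|C|\}$. Set
\[
S_w=\sum_{(b,c)\in B'\times C'} w(b,c),\qquad S_u=\sum_{(b,c)\in B'\times C'} u(b,c).
\]
Each fixed pair $(b,c)$ lies in $B'\times C'$ with probability $\frac{m}{|B|}\cdot\frac{m}{|C|}$. By linearity of expectation,
\[
\mathbb{E}[S_w]=\frac{m^2}{|B||C|}\,W\ge \delta m^3,
\qquad
\mathbb{E}[S_u]=\frac{m^2}{|B||C|}\,U\le \theta\,\mathbb{E}[S_w],
\]
where the first inequality uses $W\ge\delta m|B||C|$ and the second uses $U\le\theta W$.

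The only point requiring care is to obtain both conclusions for the same choice of $B',C'$. Applying Markov's inequality to each condition separately would require a union bound and would lose constants. Instead, consider the single random variable
\[
X=S_w-\frac{1}{2\theta}S_u.
\]
By the two estimates above,
\[
\mathbb{E}[X]\ge \mathbb{E}[S_w]-\tfrac12\mathbb{E}[S_w]=\tfrac12\mathbb{E}[S_w]\ge \tfrac12\delta m^3.
\]
Hence some outcome $(B',C')$ with $|B'|=|C'|=m$ satisfies $X\ge \frac12\delta m^3$. Fix such a choice.

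For this choice, the weights are nonnegative, so $S_u\ge0$. Therefore
\[
S_w\ge X\ge \tfrac12\delta m^3,
\]
which is the first conclusion. Moreover, $X\ge\frac12\delta m^3>0$ gives
\[
\frac{1}{2\theta}S_u = S_w-X\le S_w,
\]
that is, $S_u\le 2\theta S_w$, which is the second conclusion. The factors $\frac12$ and $2$ in the statement correspond exactly to the coefficient $\frac{1}{2\theta}$ in $X$.
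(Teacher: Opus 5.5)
Your proposal is correct and matches the paper's proof essentially step for step. Both choose independent uniformly random $m$-subsets, use linearity of expectation, and pass to the combined variable $S_w-\frac{1}{2\theta}S_u$, so that both conclusions come from a single favourable outcome.
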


\begin{proof}
Choose $B'$ uniformly at random from $\binom{B}{m}$ and $C'$ uniformly at
random from $\binom{C}{m}$, independently. Put
\[
  Z=\sum_{(b,c)\in B'\times C'} w(b,c),
  \qquad
  Y=\sum_{(b,c)\in B'\times C'} u(b,c).
\]
Each fixed pair $(b,c)\in B\times C$ belongs to $B'\times C'$ with probability $\frac{m^2}{|B||C|}$.
Hence
\[
  \mathbb E Z
  =
  \frac{m^2}{|B||C|}W
  \ge
  \delta m^3,
\qquad
  \mathbb E Y
  =
  \frac{m^2}{|B||C|}U
  \le
  \theta\,\mathbb E Z.
\]
Therefore
\[
  \mathbb E\left[Z-\frac{1}{2\theta}Y\right]
  =
  \mathbb E Z-\frac{1}{2\theta}\mathbb E Y
  \ge
  \frac12\,\mathbb E Z
  \ge
  \frac12\delta m^3.
\]
Thus there is a choice of $B'$ and $C'$ for which
  $Z-\frac{1}{2\theta}Y  \ge \frac12\delta m^3$.
For this choice, since $Y\ge 0$, we immediately get
  $Z\ge \frac12\delta m^3$.
Moreover, $Z-\frac{1}{2\theta}Y>0$,
and hence  $Y<2\theta Z$.
Therefore
\[
  \sum_{(b,c)\in B'\times C'} w(b,c)\ge \frac12\delta m^3,
\qquad
  \sum_{(b,c)\in B'\times C'} u(b,c)
  \le
  2\theta
  \sum_{(b,c)\in B'\times C'} w(b,c),
\]
as required.
\end{proof}

Given a red-blue coloring of the triples of a vertex set $V$, for a vertex
$a\in V$, the \textbf{blue link graph} $L_{\blue}(a)$ is the graph on
$V\setminus\{a\}$ in which $bc$ is an edge if and only if $abc$ is a blue
triple.

The following lemma is where we use the assumption that there is no red ordered
$K^{(3)}_3(s)$.
Let $0<\rho<1$, let $A,B,C$ be pairwise disjoint subsets of an ordered vertex set whose triples
are red-blue coloured, and let $P\subseteq B\times C$ be a bipartite graph.
Suppose that $P$ is bi-$(\eta_B,\eta_C,\sigma)$-dense. A vertex $a\in A$ is
called \textbf{bad} for $(B,C,P)$ if there exist subsets $B_a\subseteq B$ and
$C_a\subseteq C$ such that $|B_a|\ge \eta_B|B|$, $|C_a|\ge \eta_C|C|$,
but
\[
  e_{P\cap L_{\blue}(a)}(B_a,C_a)
  < \rho\sigma |B_a||C_a|,
\]
where $L_{\blue}(a)$ denotes the blue link graph of $a$ on $B\times C$.

\begin{lemma}\label{lem:bad}
There is an absolute constant $C_2$ such that the following holds. Let
$N$ and $s$ be positive integers, let $0<\rho<1/16$, and let
$A,B,C\subseteq [N]$ lie in three pairwise disjoint
intervals. Let $P\subseteq B\times C$ be bi-$(\eta_B,\eta_C,\sigma)$-dense,
where $0<\eta_B,\eta_C,\sigma\le 1$. Suppose that a red-blue coloring of the
triples on $[N]$ contains no red ordered copy of $K^{(3)}_3(s)$.
Set  $\delta=\sigma\eta_B\eta_C$
and
\[
  M=
  2^{C_2\left(\delta^{-2}s^{3/2}
  +s^2\log\frac{1}{1-8\rho}
  +s\log\frac{1}{\delta}\right)}.
\]
If $|B|,|C|\ge M$, then fewer than $M$ vertices of $A$ are bad for
$(B,C,P)$.
\end{lemma}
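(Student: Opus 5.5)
We argue by contradiction: assume at least $M$ vertices of $A$ are bad for $(B,C,P)$, fix a set $A_0$ of exactly $M$ of them, and for each $a\in A_0$ fix witnesses $B_a\subseteq B$, $C_a\subseteq C$ with $|B_a|\ge\eta_B|B|$, $|C_a|\ge\eta_C|C|$ and $e_{P\cap L_{\blue}(a)}(B_a,C_a)<\rho\sigma|B_a||C_a|$. Since $P$ is bi-$(\eta_B,\eta_C,\sigma)$-dense, $e_P(B_a,C_a)\ge\sigma|B_a||C_a|$, so at least a $(1-\rho)$-proportion of the $P$-edges inside $B_a\times C_a$ are red in the link of $a$. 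We then set up the tripartite configuration of Lemma~\ref{lem:cfs} on the three parts $A_0$, $B$, $C$, with $G_{AB}=\{ab: a\in A_0,\, b\in B_a\}$, $G_{AC}=\{ac: c\in C_a\}$, $G_{BC}=P$, and let $R$ be the $3$-graph of red triples. A triangle $abc$ of this tripartite graph is exactly a pair $(b,c)\in P\cap(B_a\times C_a)$, so the triangle count is $\sum_a e_P(B_a,C_a)\ge \sigma\eta_B\eta_C M|B||C|=\delta M|B||C|$, and the non-red triangles number less than $\rho\sum_a e_P(B_a,C_a)$.

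The parts $B,C$ are too large compared with $|A_0|=M$ for Lemma~\ref{lem:cfs}, which needs the three parts to have size at most $m$ while containing $\delta m^3$ triangles. So we apply Lemma~\ref{lem:sampling} with $m=M$ (valid since $|B|,|C|\ge M$), $w(b,c)=\#\{a\in A_0: (b,c)\in P\cap(B_a\times C_a)\}$, $u(b,c)=\#\{a: (b,c)\in P\cap(B_a\times C_a),\ abc \text{ blue}\}$ and $\theta=\rho$. Here $W\ge\delta M|B||C|$ and $U\le\rho W$, so we obtain $B'\subseteq B$, $C'\subseteq C$ of size $M$ such that the tripartite graph on $A_0,B',C'$ (restrict $G_{AB},G_{AC},P$) has at least $\tfrac12\delta M^3$ triangles, of which at most a $2\rho$ fraction are blue.

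Next we apply Lemma~\ref{lem:cfs} with $m=M$, density $\delta/2$ and $\eta=2\rho<1/8$. The condition $\rho<1/16$ ensures this, and it also keeps $1-8\rho>1/2$, so the logarithm is finite. By the remark following Lemma~\ref{lem:cfs}, the size condition holds once $C_2$ is at least the absolute constant $C$ there (converting $\exp$ to base $2$ only changes the constant). Hence $R$ contains a $K_3^{(3)}(s)$ with one class in each of $A_0,B',C'$, and all of its triples are red. Since $A,B,C$ lie in three pairwise disjoint intervals, ordering the classes by their intervals makes this an ordered red copy of $K^{(3)}_3(s)$. Here the order of $A,B,C$ among the intervals is irrelevant, because $K_3^{(3)}(s)$ is symmetric in its classes. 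This contradicts the hypothesis.

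The main obstacle is the bookkeeping in the sampling step. The triangles must be counted as weighted pairs $(b,c)$ with multiplicity over $a$, so that Lemma~\ref{lem:sampling} applies with $|A_0|=m$ exactly, and the constants $\delta/2$ and $2\rho$ must match the reformulated hypothesis of Lemma~\ref{lem:cfs}. Everything else is direct.
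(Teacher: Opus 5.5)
Your proposal is correct and follows the paper's proof essentially step for step. It uses the same weight functions $w,u$, applies Lemma~\ref{lem:sampling} with $m=M$ and $\theta=\rho$, then Lemma~\ref{lem:cfs} with density $\delta/2$ and $\eta=2\rho$, and closes with the same interval argument for orderedness. One cosmetic point: passing from $\exp(\cdot)$ to $2^{(\cdot)}$ requires $C_2\ge C/\ln 2$ rather than $C_2\ge C$, which your parenthetical about changing the constant already covers.
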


\begin{proof}
Suppose, for contradiction, that there exists a vertex set $X\subseteq A$ of bad vertices for $(B,C,P)$ with $|X|=M$.
For each $a\in X$, take $B_a\subseteq B$ and $C_a\subseteq C$ satisfying $|B_a|\ge \eta_B |B|$, $|C_a|\ge \eta_C |C|$,
but
$$e_{P\cap L_{\blue}(a)}(B_a,C_a)<\rho\sigma |B_a||C_a|.$$
Since $P$ is bi-$(\eta_B,\eta_C,\sigma)$-dense, we have
$e_P(B_a,C_a)\ge \sigma |B_a||C_a|$.

Define two weight functions on $B\times C$ by
$$ w(b,c)=|\{a\in X:b\in B_a,\ c\in C_a,\ bc\in P\}|$$
and
$$u(b,c)=|\{a\in X:b\in B_a,\ c\in C_a,\ bc\in P,\ abc\text{ is blue}\}| .$$
Clearly $w,u\ge 0$. Since $P$ is
bi-$(\eta_B,\eta_C,\sigma)$-dense,
\[
  \sum_{b,c} w(b,c)
  =\sum_{a\in X} e_P(B_a,C_a)
  \ge M\sigma\eta_B\eta_C |B||C|
  =\delta M|B||C|.
\]
By the definition of bad vertices,
\[
  \sum_{b,c} u(b,c)
  =\sum_{a\in X} e_{P\cap L_{\blue}(a)}(B_a,C_a)
  <\rho\sum_{a\in X}\sigma |B_a||C_a|
  \le \rho\sum_{a\in X} e_P(B_a,C_a)
  =\rho\sum_{b,c} w(b,c).
\]
Apply Lemma~\ref{lem:sampling} with $m=M$ and $\theta=\rho$. Thus there are
$B'\subseteq B$ and $C'\subseteq C$ such that $|B'|=|C'|=M$,
\[
  Z:=\sum_{(b,c)\in B'\times C'}w(b,c)\ge \frac12\delta M^3,
\qquad
  Y:=\sum_{(b,c)\in B'\times C'}u(b,c)\le 2\rho Z .
\]

Construct a tripartite graph with vertex classes $X,B',C'$ and bipartite edge
sets
\[
  G_{XB'}=\{ab:a\in X,\ b\in B'\cap B_a\},
\qquad
  G_{XC'}=\{ac:a\in X,\ c\in C'\cap C_a\},
\]
and $G_{B'C'}=P[B',C']$.
For $a\in X$, $b\in B'$, and $c\in C'$, the triple $(a,b,c)$ forms a triangle
in this tripartite graph if and only if
  $b\in B_a$, $c\in C_a$ and $bc\in P$.
Hence the number of triangles is exactly
 $\sum_{(b,c)\in B'\times C'} w(b,c)=Z$ and the number of blue triangles is at most $Y$. Therefore at least a $(1-2\rho)$-proportion of these triangles are
red.

We verify the hypotheses of Lemma~\ref{lem:cfs} with
$m=M$, $\delta'=\delta/2$ and $\eta=2\rho$.
Each vertex class has size $M$, the number of triangles is
$Z\ge\delta'M^3$, and at least a $(1-\eta)$-proportion of these
triangles are red. Moreover, $0<\eta<1/8$ since $0<\rho<1/16$,
and the size condition follows from the definition of $M$ and
the consequence of Lemma~\ref{lem:cfs} stated above, provided
$C_2$ is sufficiently large. 
Thus Lemma~\ref{lem:cfs} yields a red copy of $K_3^{(3)}(s)$
whose three vertex classes are contained in $X$, $B'$ and $C'$,
respectively. Since these sets lie in three pairwise disjoint intervals,
arranging the classes from left to right gives a red ordered copy,
contradicting our assumption.
\end{proof}

\section{Proof of Theorem \ref{thm:main}}

\begin{proof}
The case $n=1$ follows by taking $N=t$: if there is no red triple,
all triples are blue.
Assume now that $n\ge 2$, and choose
  $\rho=\frac{1}{128}n^{-1/(12d+2)} $.
Then $0<\rho<1/128$. Let $u_1,\ldots,u_t$ be a weak $d$-degeneracy ordering of
$H$. For each $u_i$, define
\[
  r(u_i)=|\{\{u_i,u_j,u_k\}\in E(H):i<j<k\}|.
\]

Set $\delta_0=\rho^{3d}/(16d^2)$ and
\[
  M=
  2^{C_3\left(\delta_0^{-2}n^{3/2}
  +n^2\log\frac{1}{1-8\rho}
  +n\log\frac{1}{\delta_0}\right)},
\]
where $C_3\ge C_2$ is a sufficiently large absolute constant for the application of
Lemma~\ref{lem:bad} below.

Let
\[
  N=\sum_{v\in V(H)}16\rho^{-d}(r(v)+1)M.
\]
Partition the ordered vertex set $[N]$ of $K_N^{(3)}$ into consecutive intervals
$I_v$ for $v\in V(H)$ in the original order of $H$, with
\[
  |I_v|=16\rho^{-d}(r(v)+1)M.
\]

Consider any red-blue coloring of the triples. If there is a red ordered copy
of $K^{(3)}_3(n)$, we are done. Hence assume that there is no such red copy.
We embed the vertices in the degeneracy order $u_1,\ldots,u_t$ of $H$, while always
mapping each vertex $v$ into its prescribed interval $I_v$.
We write $f$ for
the resulting partial embedding.
This guarantees that
the final copy is ordered according to the original order of $H$.

After embedding $u_1,\ldots,u_i$, for each unembedded vertex $v$ we maintain a
candidate set $C_v^i\subseteq I_v$; and for each pair $y,z$ of unembedded vertices, we maintain a bipartite
graph $P_{y,z}^i\subseteq C_y^i\times C_z^i$. For $w\in C_y^i$,
write
\[
  N_{P_{y,z}^i}(w)=\{q\in C_z^i:wq\in E(P_{y,z}^i)\},
  \qquad
  \deg_{P_{y,z}^i}(w)=|N_{P_{y,z}^i}(w)|.
\]

For an unembedded vertex $v$, define
\[
  a_i(v)=|\{\{x,x',v\}\in E(H):x,x'\in \{u_1,\ldots,u_i\}\}|.
\]
For two unembedded vertices $y,z$, define
\[
  b_i(y,z)=|\{x\in \{u_1,\ldots,u_i\}:\{x,y,z\}\in E(H)\}|.
\]
Since $u_1,\ldots,u_t$ is a weak $d$-degeneracy ordering, we have
$a_i(v)\le d$ and $b_i(y,z)\le d$.

Set $\eta_v^i :=\frac{\rho^{d-a_i(v)}}{4d}$. We maintain the following invariants.

\begin{enumerate}
\item[(i)] For every unembedded $v$, $|C_v^i|\ge \rho^{a_i(v)}|I_v|$.

\item[(ii)] For every pair $y,z$ of unembedded vertices, $P_{y,z}^i$ is
bi-$(\eta_y^i,\eta_z^i,\rho^{b_i(y,z)})$-dense.

\item[(iii)] Compatibility: every edge whose vertices have all been
embedded is mapped to a blue triple. If $x$ is embedded and $\{x,y,z\}\in E(H)$ with $y,z$
unembedded, then every edge of $P_{y,z}^i$ forms a blue triple with $f(x)$; if
$x,x'$ are embedded and $\{x,x',v\}\in E(H)$ with $v$ unembedded, then for every $w\in C_v^i$,
the triple $\{f(x),f(x'),w\}$ is blue.

\end{enumerate}

Initially $C_v^0=I_v$ and $P_{y,z}^0$ is the complete bipartite graph between
$I_y$ and $I_z$, so the invariants hold.

Assume the invariants hold after step $i$, and let $v=u_{i+1}$ be the next
vertex. For every unembedded $z$ with $b_i(v,z)>0$, we require a candidate
$w\in C_v^i$ to satisfy
\begin{align}\label{candt-W}
  \deg_{P_{v,z}^i}(w)\ge \rho^{b_i(v,z)}|C_z^i|.
\end{align}

For a fixed such $z$, let $S$ denote the set of vertices $w\in C_v^i$
violating this inequality. If $S=\varnothing$, then clearly
$|S|<\eta_v^i|C_v^i|$. Otherwise, by the definition of $S$, 
\[
  e_{P_{v,z}^i}(S,C_z^i)=\sum_{w\in S}\deg_{P_{v,z}^i}(w)
  < \rho^{b_i(v,z)}|S||C_z^i|.
\]
Therefore the bi-$(\eta_v^i,\eta_z^i,\rho^{b_i(v,z)})$-density of $P_{v,z}^i$ forces $|S|<\eta_v^i|C_v^i|$.

Let $W\subseteq C_v^i$ be the set of candidates satisfying \eqref{candt-W} for every unembedded $z$ with $b_i(v,z)>0$.
By the weak $d$-degeneracy at $v$, there are at most $d$ unembedded vertices $z$ with $b_i(v,z)>0$. Hence
\[
  |W|\ge (1-d\eta_v^i)|C_v^i|
  \ge \frac34 |C_v^i|
  \ge \frac34 \rho^d |I_v|
  \ge 12(r(v)+1)M .
\]

Now consider an edge $\{v,y,z\}\in E(H)$, where $y,z$ are unembedded.
Let $B_{y,z}$ be the set of vertices $w\in W$ for which
$P_{y,z}^i\cap L_{\blue}(w)$ is not
bi-$(\eta_y^i,\eta_z^i,\rho^{b_i(y,z)+1})$-dense on
$C_y^i\times C_z^i$.

By invariant (i), the interval-size assumption, and the bounds
$a_i(\cdot),b_i(\cdot)\le d$, we have
\[
  |C_y^i|,|C_z^i|\ge M
  \quad\text{and}\quad
  \rho^{b_i(y,z)}\eta_y^i\eta_z^i
  \ge
  \rho^d\left(\frac{\rho^d}{4d}\right)^2
  =
  \delta_0 .
\]
Thus Lemma~\ref{lem:bad}, applied with
\[
  A=W,\qquad B=C_y^i,\qquad C=C_z^i,\qquad
  P=P_{y,z}^i,\qquad \sigma=\rho^{b_i(y,z)},\qquad s=n,
\]
gives $|B_{y,z}|<M$. Indeed, its threshold is decreasing in $\delta$,
and the actual parameter $\rho^{b_i(y,z)}\eta_y^i\eta_z^i$ is at least
$\delta_0$. Thus the threshold is at most $M$ by the choice of $C_3$.

Since there are exactly $r(v)$ edges of the form $\{v,y,z\}$ with
$y,z$ unembedded, and each corresponding bad set has size less than $M$,
\[
  \left|
  W\setminus
  \bigcup_{\{v,y,z\}\in E(H),\ y,z\text{ unembedded}} B_{y,z}
  \right|
  \ge
  |W|-r(v)M
  \ge
  12(r(v)+1)M-r(v)M
  >0 .
\]
Choose $w$ from this set and set $f(v)=w$.

We now update candidates. For an unembedded vertex $z$, set
\[
  C_z^{i+1}=
  \begin{cases}
  N_{P_{v,z}^i}(w),& b_i(v,z)>0,\\
  C_z^i,& b_i(v,z)=0.
  \end{cases}
\]
Then $|C_z^{i+1}|\ge \rho^{b_i(v,z)}|C_z^i|$.
Since $a_{i+1}(z)=a_i(z)+b_i(v,z)$, invariant (i) follows.

For an unembedded pair $y,z$, define
\[
  P_{y,z}^{i+1}=
  \begin{cases}
  \left(P_{y,z}^i\cap L_{\blue}(w)\right)[C_y^{i+1},C_z^{i+1}],
    & \{v,y,z\}\in E(H),\\
  P_{y,z}^i[C_y^{i+1},C_z^{i+1}],
    & \{v,y,z\}\notin E(H).
  \end{cases}
\]
Here $Q[A,B]$ denotes the bipartite subgraph of $Q$ induced by
$A$ and $B$, and $P_{y,z}^i\cap L_{\blue}(w)$ consists of those
edges of $P_{y,z}^i$ that form a blue triple with $w$.

We now verify that invariant (ii) is preserved. By the definition of $\eta$
and the update of the candidate sets, for each unembedded $y$ we have
\[
\eta_y^{i+1}
  =
  \frac{\rho^{d-a_{i+1}(y)}}{4d}
  =
  \frac{\rho^{d-a_i(y)-b_i(v,y)}}{4d}
  =
  \rho^{-b_i(v,y)}\eta_y^i,
  \qquad
  |C_y^{i+1}|\ge \rho^{b_i(v,y)}|C_y^i|,
\]
and similarly for $z$. Hence any subset of $C_y^{i+1}$ of size at least
$\eta_y^{i+1}|C_y^{i+1}|$ has size at least $\eta_y^i|C_y^i|$, and the same
holds on the $z$-side.

If $\{v,y,z\}\in E(H)$, then $w\notin B_{y,z}$ gives the required bi-density
for the restriction
  $P_{y,z}^{i+1}$
with  density parameter
\[
  \rho^{b_i(y,z)+1}=\rho^{b_{i+1}(y,z)}.
\]
If $\{v,y,z\}\notin E(H)$, then the required bi-density for $P_{y,z}^{i+1}$
follows from the induction hypothesis for $P_{y,z}^i$, with density parameter
\[
  \rho^{b_i(y,z)}=\rho^{b_{i+1}(y,z)}.
\]
Thus invariant (ii) is preserved.

Finally, compatibility (iii) is preserved. Previously imposed compatibility
conditions remain valid after restricting candidate sets and bipartite graphs.
The only new conditions involve the newly embedded vertex $v$. Edges of the
forms $\{x,x',v\}$, $\{x,v,z\}$, and $\{v,y,z\}$ are guaranteed, respectively,
by $w\in C_v^i$, by the update $C_z^{i+1}=N_{P_{v,z}^i}(w)$ together with
compatibility at step $i$, and by the restriction
$P_{y,z}^{i+1}\subseteq L_{\blue}(w)$. This completes the induction step.

After all vertices have been embedded, compatibility ensures that every edge of
$H$ is mapped to a blue triple. Since $f(v)\in I_v$ for every $v$ and the
intervals $I_v$ are arranged in the order of $H$, this copy is ordered.

It remains to estimate $N$. Each edge $\{u_a,u_b,u_c\}$ with
$a<b<c$ contributes once to $\sum_v r(v)$ and is counted in the
weak-degeneracy condition at both $u_b$ and $u_c$. Hence
$2e(H)\le dt$, and therefore
\[
  \sum_v(r(v)+1)=e(H)+t\le\left(1+\frac d2\right)t.
\]
Consequently, by the definition of $N$,
\[
  N=16\rho^{-d}M\sum_v(r(v)+1)
  \le16\left(1+\frac d2\right)t\rho^{-d}M.
\]

We now estimate $M$. Substituting
$\delta_0=\rho^{3d}/(16d^2)$ and
$\rho=128^{-1}n^{-1/(12d+2)}$, we obtain
$
  \delta_0^{-2}n^{3/2}
  =256d^4\rho^{-6d}n^{3/2}
  =O_d\left(n^{2-\frac1{12d+2}}\right).
$
Moreover, $\log(1/(1-x))\le2x$ for $0\le x\le1/2$ gives
$
  n^2\log\frac1{1-8\rho}
  =O(n^2\rho)
  =O\left(n^{2-\frac1{12d+2}}\right),
$
while $n\log(1/\delta_0)=O_d(n\log n)
=O_d(n^{2-1/(12d+2)})$.
Thus the definition of $M$ implies $\log M=O_d(n^{2-1/(12d+2)})$. 

Combining this with the bound on $N$ and $\log(1/\rho)=O_d(\log n)$, we conclude that
\[
  \log\frac Nt
  \le\log\left(16\left(1+\frac d2\right)\right)
     +d\log\frac1\rho+\log M
  =O_d\left(n^{2-\frac1{12d+2}}\right).
\]
Consequently, $N\le t\,2^{C_d n^{2-1/(12d+2)}}$ for a suitable
constant $C_d>0$. Since $1/(12d+2)\ge1/(14d)$ for $d\ge1$,
the theorem follows with the absolute constant $c=1/14$.
\end{proof}

\section{Standard degeneracy is not sufficient}

\begin{proof}[Proof of Theorem~\ref{main-2}]
We construct a red--blue coloring of the triples of $[N]$ containing neither
a blue ordered copy of $F_m$ nor a red ordered copy of $K^{(3)}_3(n)$.
Fix
  $N=\left\lceil2^{n^2/100}\right\rceil$,
  $m=\left\lfloor2^{n/10}\right\rfloor$ and
  $p=2^{-n/20}$.
We will take $F=F_m$, where the ordered $3$-graph $F_m$ is defined as follows. Its vertices are
\[
  v_1,\ldots,v_m,\quad u_{ij}\quad (1\le i<j\le m),
\]
ordered by
\[
  v_1<\cdots<v_m<u_{12}<u_{13}<\cdots<u_{m-1,m},
\]
where the vertices $u_{ij}$ are ordered lexicographically. Its edges are
  $\{v_i,v_j,u_{ij}\}$ where $1\le i<j\le m$.
The displayed ordering witnesses standard $1$-degeneracy: no edge
containing $v_i$ has both other vertices earlier in the order, while
each $u_{ij}$ belongs to exactly one edge.

We now construct a coloring of the triples of $[N]$. First take a random
red-blue coloring $\chi$ of the pairs of $[N]$, where each pair is coloured
red independently with probability $p$ and blue otherwise. We then use $\chi$
to define a red-blue coloring of the complete ordered $3$-graph on $[N]$:
for every triple $x<y<z$, colour $xyz$ red if $xy$ is red under $\chi$, and
blue otherwise.

We claim that, with positive probability, the auxiliary pair-coloring $\chi$
contains neither a red $K_{n,n}$ nor a blue $K_m$. Indeed, using
$N\le2^{n^2/100+1}$, $2^{n/10-1}\le m\le2^{n/10}$ and
$\binom m2\ge m^2/4$, a union bound gives
\[
\begin{aligned}
  \binom{N}{n}^{2}p^{n^2}
  +\binom{N}{m}(1-p)^{\binom m2}
  &\le N^{2n}p^{n^2}+N^m e^{-p\binom m2}\\
  &\le 2^{-3n^3/100+2n}
  +2^{(n^2/100+1)2^{n/10}-2^{3n/20-4}}\\
  &=o(1),
\end{aligned}
\]
since $2^{n/20}/n^2\to\infty$.
Thus, for all sufficiently large $n$, such an auxiliary pair-coloring $\chi$
exists.

Fix such a $\chi$, and consider the induced coloring of the complete ordered
$3$-graph on $[N]$. There is no red ordered copy of $K_3^{(3)}(n)$:
Suppose that there is a red ordered copy of $K_3^{(3)}(n)$
with vertex classes $A<B<C$, each of size $n$. Fix $c\in C$.
For every $a\in A$ and $b\in B$, the triple $abc$ is red and
$a<b<c$, so the definition of the triple-coloring implies
that $\chi(ab)$ is red. Thus all pairs between $A$ and $B$
are red under $\chi$, contradicting the absence of a red
$K_{n,n}$.

There is also no blue ordered copy of $F_m$. Indeed, suppose such a copy exists,
and let $x_i$ be the image of $v_i$. Since the copy is ordered,
\[
  x_1<\cdots<x_m,
\]
and each image of some $u_{ij}$ lies to the right of all the $x_i$. For every
$i<j$, the triple corresponding to $\{v_i,v_j,u_{ij}\}$ is blue, so the pair
$x_ix_j$ is blue under $\chi$. Thus $x_1,\ldots,x_m$ span a blue $K_m$ in the
auxiliary pair-coloring, again a contradiction.

Therefore the complete ordered $3$-graph on $[N]$ admits a red-blue coloring
with no blue ordered copy of $F_m$ and no red ordered copy of $K_3^{(3)}(n)$.
Since $|V(F_m)|=m+\binom m2\le m^2\le2^{n/5}$ and
$r_<(F_m,K_3^{(3)}(n))>N\ge2^{n^2/100}$, this proves the theorem.
\end{proof}

\section*{Declaration on the Use of AI}

The authors provided the key ideas underlying the proofs of Theorems~\ref{thm:main} and~\ref{main-2}. Generative AI tools were used to assist in discussing proof strategies, checking proofs, and improving the exposition. All mathematical arguments, results, and conclusions were reviewed and verified by the authors.


\begin{thebibliography}{99}

\bibitem{balko25}
M. Balko, A survey on ordered Ramsey numbers, preprint, arXiv:2502.02155 [math.CO], 2025.

\bibitem{bckk13}
M. Balko, J. Cibulka, K. Kr\'al and J. Kyn\v{c}l, Ramsey numbers of ordered graphs, Electron. J. Combin. {\bf 27} (2020), no.~1, Paper No. 1.16, 32 pp.

\bibitem{bjv16}
M. Balko, V. Jel\'inek and P. Valtr, On ordered Ramsey numbers of bounded-degree graphs, J. Combin. Theory Ser. B {\bf 134} (2019), 179--202.

\bibitem{BV}
M. Balko and M. Vizer, On ordered Ramsey numbers of tripartite 3-uniform hypergraphs, SIAM J. Discrete Math. {\bf 36} (2022), no.~1, 214--228.

\bibitem{crst83}
V. Chv\'atal, V. R\"odl, E. Szemer\'edi and W. T. Trotter, Jr., The Ramsey number of a graph with bounded maximum degree, J. Combin. Theory Ser. B {\bf 34} (1983), no.~3, 239--243.

\bibitem{clfs17}
D. Conlon, J. Fox, C. Lee and B. Sudakov, Ordered Ramsey numbers, J. Combin. Theory Ser. B {\bf 122} (2017), 353--383.

\bibitem{cfs09}
D. Conlon, J. Fox and B. Sudakov, Ramsey numbers of sparse hypergraphs, Random Structures Algorithms {\bf 35} (2009), no.~1, 1--14.

\bibitem{cfs11}
D. Conlon, J. Fox and B. Sudakov, Large almost monochromatic subsets in hypergraphs, Israel J. Math. {\bf 181} (2011), 423--432.

\bibitem{CFS}
D. Conlon, J. Fox and B. Sudakov, Erd\H{o}s---Hajnal-type theorems in hypergraphs, J. Combin. Theory Ser. B {\bf 102} (2012), no.~5, 1142--1154.

\bibitem{cfsSurvey}
D. Conlon, J. Fox and B. Sudakov, Recent developments in graph Ramsey theory, in {\it Surveys in combinatorics 2015}, 49--118, London Math. Soc. Lecture Note Ser., 424, Cambridge Univ. Press, Cambridge.

\bibitem{cnko08}
O. Cooley, N. Fountoulakis, D. K\"uhn and D. Osthus, 3-uniform hypergraphs of bounded degree have linear Ramsey numbers, J. Combin. Theory Ser. B {\bf 98} (2008), no.~3, 484--505.

\bibitem{cnko09}
O. Cooley, N. Fountoulakis, D. K\"uhn and D. Osthus, Embeddings and Ramsey numbers of sparse $k$-uniform hypergraphs, Combinatorica {\bf 29} (2009), no.~3, 263--297.

\bibitem{erdosSzekeres35}
P. Erd\H{o}s and G. Szekeres, A combinatorial problem in geometry, Compositio Math. {\bf 2} (1935), 463--470.

\bibitem{foxHe}
J. Fox and X. He, Independent sets in hypergraphs with a forbidden link, Proc. Lond. Math. Soc. (3) {\bf 123} (2021), no.~4, 384--409.

\bibitem{fpss12}
J. Fox, J. Pach, B. Sudakov and A. Suk, Erd\H{o}s-Szekeres-type theorems for monotone paths and convex bodies, Proc. Lond. Math. Soc. (3) {\bf 105} (2012), no.~5, 953--982.

\bibitem{moshShap14}
G. Moshkovitz and A. Shapira, Ramsey theory, integer partitions and a new proof of the Erd\H{o}s-Szekeres theorem, Adv. Math. {\bf 262} (2014), 1107--1129.

\bibitem{nsrs08}
B. Nagle, S. Olsen, V. R\"odl and M. Schacht, On the Ramsey number of sparse 3-graphs, Graphs Combin. {\bf 24} (2008), no.~3, 205--228.

\end{thebibliography}
\end{document}